\theoremstyle{plain}
\newtheorem{prop}{Proposition}
\newtheorem{thm}[prop]{Theorem}
\theoremstyle{definition}
\newtheorem{defn}[prop]{Definition}
\title{Fisher--Rao Geometry and Jeffreys Prior for Pareto Distribution}
\date{}
\author[1,2]{Mingming Li\thanks{3120160587@bit.edu.cn}}
\author[1,2]{Huafei Sun\thanks{huafeisun@bit.edu.cn}}
\author[3,1]{Linyu Peng\thanks{l.peng@aoni.waseda.jp}}
\affil[1]{ School of Mathematics and Statistics,  Beijing Institute of Technology, Beijing 100081, China}
\affil[2]{Beijing Key Laboratory on MCAACI, Beijing 100081, China}
\affil[3]{Waseda Institute for Advanced Study, Waseda University, Tokyo 169-8050, Japan}
\begin{document}

\maketitle
\begin{abstract}
In this paper, we investigate the Fisher--Rao geometry of the two-parameter family of Pareto distribution.
We prove that its geometrical structure is isometric to the Poincar\'e upper half-plane model, and then study the corresponding geometrical features by presenting explicit expressions for connection, curvature and geodesics.
It is then applied to Bayesian inference by considering the Jeffreys prior determined by the volume form. In addition, the posterior distribution from the prior is computed, providing a systematic method to the Bayesian inference for Pareto distribution.
\end{abstract}

\section{Introduction}

A statistical model is often described by a family of distributions which are indexed by a set of parameters. These parameters form a space which can be endowed with some differential-geometrical structures reflecting the properties of the specified distributions. These geometrical structures then provide geometrical approaches to deal with statistical problems. One of the most useful structures is the Fisher--Rao metric which stems from the use of the Fisher information matrix~\cite{Fisher1922}. Rao~\cite{Rao1945} considered the Fisher information from a differential-geometrical viewpoint. Gradually, the Fisher--Rao metric becomes one of the central concepts in the subject of information geometry, e.g.~\cite{Amari1985,Amari2007,Sun2016,Efron1975}, which combines ideas from differential geometry and information theory to study the geometrical structure of statistical models.

In this paper, we focus on the two-parameter family of Pareto distribution, a family of statistical models with power-law probability distributions that is often used in describing many scientific and social phenomena, e.g.~\cite{Arnold1983a,Zipf1949}.
Pareto distribution does not belong to the well-studied regular type of distributions, since the support of the probability density depends on one of its parameters. Consequently, the Fisher--Rao geometry for Pareto distribution is not as regular as many other distributions. Note that the Fisher--Rao metric is no longer equal to the negative Hessian form as it is expected in regular cases. Although both of them are symmetric, the negative Hessian form is not guaranteed to be positive definite on the whole parameter space. However, in the earlier work~\cite{Peng2007}, the geometrical structure of Pareto distribution was actually calculated by the negative Hessian form. Thus, this paper is first devoted to presenting the proper metric structure for Pareto distribution. Interestingly, this structure is readily identified, as we shall prove that it is isometric to the Poincar\'e upper half-plane model which is reminiscent of a similar result about the two-parameter family of one-dimensional normal distribution. Based on this observation, the geometrical characteristics of Pareto distribution such as curvature and geodesics can be readily obtained.
These results are included in Section~\ref{sec:geom}.

To illustrate an application of the geometrical structure for statistical inference over Pareto distribution, we utilize the Jeffreys prior to develop a systematic approach to Bayesian inference for Pareto distribution.
The Jeffreys prior (cf. \cite{Jeffreys1946}) is a non-informative prior distribution for a parameter space, that is, a prior distribution without any subjective information assumed. The basic idea of the Jeffreys prior is, to define a prior distribution such that the probability of finding a parameter in a specified region is proportional to the geometrical volume of this region. Different from the previous work \cite{KimKangLee2009},  no change of variables is introduced in this paper as a change of variable involving parameters may greatly affect the statistical and geometric properties of the distribution. For instance, it is well known that any normal distribution can be transformed to a standard one.
In this paper, our derived prior is an improper prior which does not have a proper probability distribution, nevertheless we can proceed to calculate the posterior distribution from the prior, and to obtain a proper posterior probability distribution as we shall show in Section~\ref{sec:bayes}. Then the posterior distribution can be directly applied to Bayesian inference for Pareto distributions, as an illustration of that, we present simulation results in Section~\ref{sec:sim}.

\section{Preliminaries}
For convenience, we review some necessary differential-geometrical concepts and results (cf.~\cite{Petersen2006,Tu2017}) which are used later in Section~\ref{sec:geom}.
\begin{defn}
\label{def:rie}
A \emph{Riemannian metric} on a smooth manifold $M$ is a $C^\infty$ assignment to each point $p\in M$ of an inner product $g_p$ on the tangent space $T_pM$. A \emph{Riemannian manifold} is a pair $(M,g)$ consisting of a manifold $M$ together with a Riemannian metric $g$ on $M$.
\end{defn}
\begin{defn}
A diffeomorphism $F:(M,g)\to(N,h)$ is called an \emph{isometry} between two Riemannian manifolds $(M,g)$ and $(N,h)$ if $F^*h=g$.
\end{defn}
\begin{thm}
On a Riemannian manifold there always exists a unique Riemannian connection, namely, an affine connection that is torsion-free and compatible with the metric.
\end{thm}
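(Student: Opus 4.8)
The plan is to prove both uniqueness and existence by means of the Koszul formula, which expresses the connection purely in terms of the metric $g$ and Lie brackets of vector fields. First I would establish uniqueness. Suppose $\nabla$ is an affine connection that is torsion-free, so that $\nabla_X Y-\nabla_Y X=[X,Y]$, and compatible with $g$, so that $X\,g(Y,Z)=g(\nabla_X Y,Z)+g(Y,\nabla_X Z)$ for all vector fields $X,Y,Z$. Writing the compatibility identity for the three cyclic permutations of $(X,Y,Z)$, adding the first two and subtracting the third, and then using the torsion-free condition to replace each difference $\nabla_U V-\nabla_V U$ by $[U,V]$, all the unwanted covariant derivatives cancel and one is left with the Koszul formula
\[
2\,g(\nabla_X Y,Z)=X\,g(Y,Z)+Y\,g(X,Z)-Z\,g(X,Y)+g([X,Y],Z)-g([X,Z],Y)-g([Y,Z],X).
\]
The right-hand side involves only $g$ and Lie brackets, with no reference to $\nabla$; since $g$ is non-degenerate, the vector field $\nabla_X Y$ is thereby determined uniquely. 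Hence at most one such connection can exist.

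For existence I would turn the argument around and \emph{define} $\nabla_X Y$ by the Koszul formula: for fixed $X,Y$, the right-hand side is a map sending $Z$ to a smooth function, and I would show it is $C^\infty(M)$-linear in $Z$, so that it defines a $1$-form; non-degeneracy of $g$ then yields a unique vector field, declared to be $\nabla_X Y$. It remains to verify that this $\nabla$ is genuinely an affine connection, namely that it is $C^\infty(M)$-linear in the slot $X$ and obeys the Leibniz rule $\nabla_X(fY)=(Xf)Y+f\nabla_X Y$, and that it is torsion-free and metric-compatible. Each of these follows by substituting into the Koszul formula and simplifying, using the bracket identity $[fX,Y]=f[X,Y]-(Yf)X$.

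The main obstacle, and the step that most deserves care, is the tensoriality check in the $Z$ slot together with the Leibniz rule in the $Y$ slot. When $Z$ is replaced by $fZ$, each of the six terms on the right-hand side picks up an extra contribution proportional to $\mathrm{d}f$, arising from the derivative terms and from the brackets $[X,fZ]$ and $[Y,fZ]$; the heart of the verification is to confirm that these anomalous $\mathrm{d}f$-terms cancel exactly, leaving $2f\,g(\nabla_X Y,Z)$, which is precisely what tensoriality in $Z$ requires. The same bookkeeping, applied with $Y$ replaced by $fY$, is where the derivative term $X\,g(fY,Z)$ produces the $(Xf)\,g(Y,Z)$ contribution responsible for the Leibniz rule rather than mere tensoriality. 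Once these cancellations are checked, the remaining connection axioms and the torsion-free and compatibility properties follow by direct and symmetric computation.
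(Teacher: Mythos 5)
Your proposal is correct: this is the classical fundamental theorem of Riemannian geometry, and your Koszul-formula argument (uniqueness by cyclic permutation of the compatibility identity plus the torsion-free condition, existence by taking the formula as the definition and verifying tensoriality in $Z$, the Leibniz rule in $Y$, torsion-freeness and compatibility) is the standard proof. The paper itself states this theorem without proof as a background preliminary, citing textbooks (Petersen; Tu) in which precisely this Koszul-formula argument appears, so your attempt matches the proof the paper implicitly relies on, and the cancellation bookkeeping you single out as the delicate step does indeed go through as you describe.
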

\begin{defn}
Let $\nabla$ be an affine connection on an $m$-dimensional manifold $M$
and let $e_1,\dotsc,e_m$ be a local frame on $M$.
For a vector field $X$, the \emph{connection forms} $\omega_j^i$ are defined by
\[
\nabla_Xe_j=\omega_j^i(X)e_i,
\]
and the matrix $[\omega_j^i]$ is called the \emph{connection matrix} of the connection $\nabla$ relative to the frame $e_1,\dotsc,e_m$.
Similarly, the \emph{curvature forms} $\Omega_j^i$ are defined by
\[
R(X,Y)e_j=\Omega_j^i(X,Y)e_i,
\]
where the curvature tensor $R$ is given by
\[
R(X,Y)=\nabla_X\nabla-\nabla_Y\nabla_X-\nabla_{[X,Y]},
\]
and the matrix $[\Omega_j^i]$ is called the \emph{curvature matrix} of the connection $\nabla$ relative to the frame $e_1,\dotsc,e_m$.
\end{defn}
Note that we adopt the Einstein summation convention here and throughout the paper.
\begin{prop}
The curvature forms $\Omega_j^i$ are related to the connection forms $\omega_j^i$ by the second structural equation:
\begin{equation}
\label{eqn:2se}
\Omega_j^i=\mathrm d\omega_j^i+\omega_k^i\wedge\omega_j^k.
\end{equation}
\end{prop}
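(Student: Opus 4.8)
The plan is to evaluate the curvature tensor directly on the frame field and recognize the resulting expression as the right-hand side of~\eqref{eqn:2se}. I would begin by applying the definition $R(X,Y)=\nabla_X\nabla_Y-\nabla_Y\nabla_X-\nabla_{[X,Y]}$ to the frame vector field $e_j$ and expanding each term using the connection forms. The key computational device is the Leibniz rule for the connection: since $\nabla_Y e_j=\omega_j^k(Y)e_k$, applying $\nabla_X$ and using that each $\omega_j^k(Y)$ is a smooth function gives
\begin{equation*}
\nabla_X\nabla_Y e_j = X\bigl(\omega_j^k(Y)\bigr)e_k + \omega_j^k(Y)\,\omega_k^i(X)\,e_i,
\end{equation*}
and symmetrically for $\nabla_Y\nabla_X e_j$ with $X$ and $Y$ interchanged, while the third term is simply $\nabla_{[X,Y]}e_j=\omega_j^i([X,Y])e_i$.

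Next I would collect all three contributions as coefficients of the single frame field $e_i$, relabelling dummy summation indices where necessary. After grouping, the coefficient of $e_i$ in $R(X,Y)e_j$ splits naturally into a first-order piece
\begin{equation*}
X\bigl(\omega_j^i(Y)\bigr)-Y\bigl(\omega_j^i(X)\bigr)-\omega_j^i([X,Y]),
\end{equation*}
and a quadratic piece $\omega_j^k(Y)\,\omega_k^i(X)-\omega_j^k(X)\,\omega_k^i(Y)$.

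The decisive step is then to recognize each piece through the intrinsic formulas for the exterior derivative and the wedge product of $1$-forms. The first-order piece is exactly $\mathrm d\omega_j^i(X,Y)$ by the coordinate-free expression for the exterior derivative of a $1$-form, while the quadratic piece matches $(\omega_k^i\wedge\omega_j^k)(X,Y)=\omega_k^i(X)\omega_j^k(Y)-\omega_k^i(Y)\omega_j^k(X)$ once one uses that scalar coefficients commute. Since $\Omega_j^i(X,Y)$ is by definition the coefficient of $e_i$ in $R(X,Y)e_j$, equating the two expressions for arbitrary $X,Y$ yields $\Omega_j^i=\mathrm d\omega_j^i+\omega_k^i\wedge\omega_j^k$, which is~\eqref{eqn:2se}.

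I expect the main obstacle to be purely one of bookkeeping rather than of ideas: keeping the Einstein-convention dummy indices consistent across the three expanded terms, and matching the sign and ordering conventions so that the quadratic piece aligns with the chosen convention $(\alpha\wedge\beta)(X,Y)=\alpha(X)\beta(Y)-\alpha(Y)\beta(X)$ for $1$-forms, together with the corresponding factor-free convention for $\mathrm d$ of a $1$-form. Once these conventions are fixed consistently, the identification is immediate and no deeper geometric input is required.
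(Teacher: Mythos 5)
Your proof is correct and complete: the expansion of $R(X,Y)e_j$ via the Leibniz rule, the regrouping into a first-order piece matching the intrinsic formula $\mathrm d\omega_j^i(X,Y)=X(\omega_j^i(Y))-Y(\omega_j^i(X))-\omega_j^i([X,Y])$ and a quadratic piece matching $(\omega_k^i\wedge\omega_j^k)(X,Y)$, all checks out under the stated factor-free conventions, which you rightly flag as the only delicate point. Note that the paper itself states this proposition without proof, citing standard references, so there is no in-paper argument to compare against; your derivation is precisely the standard textbook one found in those sources.
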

\begin{prop}
\label{prop:skew}
Let $(M,g)$ be an $m$-dimensional Riemannian manifold, $\nabla$ be the Riemannian connection,  $e_1,\dotsc,e_m$ be an orthonormal frame, and $\theta^1,\dotsc,\theta^m$ be the dual frame.
Then the connection matrix $[\omega_j^i]$ of $\nabla$ relative to $e_1,\dotsc,e_m$ is a skew symmetric matrix such that the first structural equation holds,
\begin{equation}
\label{eqn:1se}
\mathrm d\theta^i+\omega_j^i\wedge\theta^j=0,\ \forall i=1,\dotsc,m.
\end{equation}
Consequently, the curvature matrix $[\Omega_j^i]$ is also skew symmetric.
\end{prop}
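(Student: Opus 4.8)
The plan is to establish the three assertions in turn, invoking the two defining properties of the Riemannian connection guaranteed by the theorem above: compatibility with the metric and torsion-freeness.

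First I would prove skew-symmetry of the connection matrix from metric compatibility. Since $e_1,\dotsc,e_m$ is orthonormal, the inner products $\langle e_i,e_j\rangle=\delta_{ij}$ are constant, so $X\langle e_i,e_j\rangle=0$ for every vector field $X$. Compatibility of $\nabla$ with $g$ gives $X\langle e_i,e_j\rangle=\langle\nabla_Xe_i,e_j\rangle+\langle e_i,\nabla_Xe_j\rangle$. Expanding $\nabla_Xe_i=\omega_i^k(X)e_k$ and using orthonormality, the two terms on the right collapse to $\omega_i^j(X)$ and $\omega_j^i(X)$ respectively, so that $\omega_i^j+\omega_j^i=0$.

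Next I would derive the first structural equation from torsion-freeness. The key is to extract the components $\tau^i:=\theta^i(T(\cdot,\cdot))$ of the vector-valued torsion form in the frame. Using the intrinsic formula $\mathrm d\theta^i(X,Y)=X(\theta^i(Y))-Y(\theta^i(X))-\theta^i([X,Y])$ together with the frame expansions of $\nabla_XY$ and $\nabla_YX$, a direct computation should show that $\tau^i=\mathrm d\theta^i+\omega_j^i\wedge\theta^j$. Since the Riemannian connection is torsion-free, $T=0$ and hence each $\tau^i$ vanishes, yielding the first structural equation \eqref{eqn:1se}. This bookkeeping step, matching the terms arising from $\theta^i(\nabla_XY-\nabla_YX)$ against the expansion of $(\omega_j^i\wedge\theta^j)(X,Y)$, is where the argument is most error-prone, so I expect it to be the main obstacle.

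Finally, skew-symmetry of the curvature matrix should follow formally from the second structural equation \eqref{eqn:2se} and the skew-symmetry just established. I would write $\Omega_i^j=\mathrm d\omega_i^j+\omega_k^j\wedge\omega_i^k$, substitute $\omega_i^j=-\omega_j^i$ (and likewise for the remaining entries), and use the anticommutativity $\alpha\wedge\beta=-\beta\wedge\alpha$ of $1$-forms to rearrange the wedge term. The expression should collapse exactly to $-(\mathrm d\omega_j^i+\omega_k^i\wedge\omega_j^k)=-\Omega_j^i$, giving $\Omega_i^j=-\Omega_j^i$ as claimed.
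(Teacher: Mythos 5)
Your proposal is correct: the metric-compatibility computation gives $\omega_i^j+\omega_j^i=0$ in an orthonormal frame, the torsion identity $\theta^i(T(X,Y))=\mathrm d\theta^i(X,Y)+(\omega_j^i\wedge\theta^j)(X,Y)$ yields \eqref{eqn:1se} when $T=0$, and substituting skew-symmetry into \eqref{eqn:2se} with the anticommutativity of $1$-forms gives $\Omega_i^j=-\Omega_j^i$. The paper itself offers no proof---it quotes this proposition as a standard preliminary, citing its differential-geometry references---and your sketch is exactly the standard textbook argument those references give, so there is nothing to reconcile.
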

\begin{prop}
Let $(U,x^1,\dotsc,x^m)$ be a coordinate chart on a Riemannian manifold $(M,g)$, and $g_{ij}=g\left(\frac{\partial}{\partial x^i},\frac{\partial}{\partial x^j}\right)$. Then the volume form of $M$ on $U$ is given by
\begin{equation}
\label{eqn:volm}
\mathrm{vol}=\sqrt{\det [g_{ij}]}\,\mathrm dx^1\wedge\dotsm\wedge\mathrm dx^m.
\end{equation}
\end{prop}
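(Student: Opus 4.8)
The plan is to obtain \eqref{eqn:volm} from the intrinsic description of the Riemannian volume form as the unique top-degree form that equals $\theta^1\wedge\dotsm\wedge\theta^m$ for any positively oriented orthonormal coframe $\theta^1,\dotsc,\theta^m$. Since both $\theta^1\wedge\dotsm\wedge\theta^m$ and $\mathrm dx^1\wedge\dotsm\wedge\mathrm dx^m$ are nowhere-vanishing sections of the (one-dimensional) bundle of top forms on $U$, they must be proportional, and the whole task reduces to identifying the proportionality factor and checking that it equals $\sqrt{\det[g_{ij}]}$.

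First I would fix an orthonormal frame $e_1,\dotsc,e_m$ on $U$, which exists by applying the Gram--Schmidt process to the coordinate fields $\frac{\partial}{\partial x^1},\dotsc,\frac{\partial}{\partial x^m}$, and expand $\frac{\partial}{\partial x^i}=A_i^j e_j$ for a smooth invertible matrix $A=[A_i^j]$. Substituting into $g_{ij}=g\!\left(\frac{\partial}{\partial x^i},\frac{\partial}{\partial x^j}\right)$ and using $g(e_k,e_l)=\delta_{kl}$ gives $g_{ij}=A_i^kA_j^k$, i.e.\ $[g_{ij}]=AA^{\mathsf T}$ as matrices. Consequently $\det[g_{ij}]=(\det A)^2$, so $\sqrt{\det[g_{ij}]}=\lvert\det A\rvert$. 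This is the computational core and is entirely routine.

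Writing $\mathrm{vol}=\theta^1\wedge\dotsm\wedge\theta^m=c\,\mathrm dx^1\wedge\dotsm\wedge\mathrm dx^m$, I would pin down the scalar $c$ by evaluating both sides on $\left(\frac{\partial}{\partial x^1},\dotsc,\frac{\partial}{\partial x^m}\right)$. The right-hand side returns $c$ because $\mathrm dx^i\!\left(\frac{\partial}{\partial x^j}\right)=\delta^i_j$, whereas the left-hand side is $\det\!\left[\theta^i\!\left(\frac{\partial}{\partial x^j}\right)\right]=\det[A_j^i]=\det A$, using $\theta^i\!\left(\frac{\partial}{\partial x^j}\right)=A_j^i$ from the expansion above. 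Hence $c=\det A$.

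The one point that needs care is reconciling $c=\det A$ with $\sqrt{\det[g_{ij}]}=\lvert\det A\rvert$: the two agree exactly when $\det A>0$. I would remove this ambiguity by choosing the orthonormal frame to be positively oriented with respect to the chart $(U,x^1,\dotsc,x^m)$ --- replacing, say, $e_1$ by $-e_1$ if necessary --- which forces $\det A>0$ and hence $c=\det A=\lvert\det A\rvert=\sqrt{\det[g_{ij}]}$, establishing \eqref{eqn:volm}.
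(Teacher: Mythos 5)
Your proof is correct, and its orientation bookkeeping (choosing the orthonormal frame with $\det A>0$ so that $c=\det A=\sqrt{\det[g_{ij}]}$) is exactly the point that needs care. The paper states this proposition as a standard preliminary without proof, citing \cite{Petersen2006,Tu2017}, and your argument --- Gram--Schmidt to get an orthonormal frame, the factorization $[g_{ij}]=AA^{\mathsf T}$, and evaluation of both top forms on the coordinate frame --- is precisely the standard proof found in those references, so there is nothing to reconcile.
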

\begin{defn}
\label{def:chr}
Let $\nabla$ be an affine connection on a manifold $M$
and $\partial_1,\dotsc,\partial_m$ be a local coordinate frame on $M$.
Then the \emph{Christoffel symbols} $\Gamma_{ij}^k$ of $\nabla$ relative to $\partial_1,\dotsc,\partial_m$ are defined by
\[
\nabla_{\partial_i}\partial_j=\Gamma_{ij}^k\partial_k.
\]
\end{defn}
\begin{prop}
Let $(U,x^1,\dotsc,x^m)$ be a coordinate chart on a manifold with
$\Gamma_{ij}^k$ be the Christoffel symbols of a connection.
Then the geodesic equations are given by
\begin{equation}
\label{eqn:geod}
\ddot x^k+\Gamma_{ij}^k\dot x^i\dot x^j=0,\ k=1,\dotsc,m.
\end{equation}
\end{prop}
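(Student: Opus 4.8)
The plan is to derive \eqref{eqn:geod} from the intrinsic definition of a geodesic as a self-parallel curve. Concretely, I regard a geodesic as a curve $\gamma(t)$ with coordinate representation $\bigl(x^1(t),\dotsc,x^m(t)\bigr)$ in the chart $(U,x^1,\dotsc,x^m)$ whose velocity field is parallel along itself, that is, $\nabla_{\dot\gamma}\dot\gamma=0$. The whole task then reduces to expressing this covariant condition in the coordinate frame $\partial_1,\dotsc,\partial_m$ and reading off its components.

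First I would write the velocity as $\dot\gamma=\dot x^i\partial_i$ with $\dot x^i=\mathrm dx^i/\mathrm dt$. The one point that needs care is that $\dot\gamma$ is defined only along $\gamma$ and not on an open subset of $M$, so $\nabla_{\dot\gamma}\dot\gamma$ must be understood as the covariant derivative of a vector field along the curve. I would therefore first establish, for any field $W=W^k\partial_k$ along $\gamma$, the coordinate formula $\tfrac{DW}{\mathrm dt}=\bigl(\dot W^k+\Gamma_{ij}^k\dot x^iW^j\bigr)\partial_k$. This follows from $\mathbb R$-linearity and the Leibniz rule for $\nabla$, together with $\nabla_{\dot\gamma}\partial_j=\dot x^i\nabla_{\partial_i}\partial_j=\dot x^i\Gamma_{ij}^k\partial_k$, where the last equality is precisely the definition of the Christoffel symbols in Definition~\ref{def:chr}, and from the chain rule $\dot\gamma W^k=\dot W^k$ for the derivative of the component functions along $\gamma$.

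Next I would specialize this formula to $W=\dot\gamma$, that is, $W^k=\dot x^k$ and hence $\dot W^k=\ddot x^k$. Substituting gives $\nabla_{\dot\gamma}\dot\gamma=\bigl(\ddot x^k+\Gamma_{ij}^k\dot x^i\dot x^j\bigr)\partial_k$. Since the coordinate vector fields $\partial_1,\dotsc,\partial_m$ are pointwise linearly independent, the vanishing of this vector is equivalent to the vanishing of each coefficient, which is exactly the system \eqref{eqn:geod}.

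I do not expect a genuine obstacle in this argument, as it is a direct unwinding of definitions; the only step requiring attention is the derivation of the coordinate expression for the covariant derivative along a curve, since one must correctly combine the Leibniz rule with the chain rule and avoid conflating the restriction of a global vector field with a field defined only along $\gamma$. Once that formula is in hand, the geodesic equations follow by a routine comparison of coefficients.
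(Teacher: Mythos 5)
Your proposal is correct: defining a geodesic as an autoparallel curve ($\nabla_{\dot\gamma}\dot\gamma=0$), deriving the coordinate formula for the covariant derivative along a curve, and reading off coefficients is exactly the standard derivation in the textbooks the paper cites (the paper itself states this proposition without proof as a preliminary). Your care in distinguishing fields along $\gamma$ from global vector fields, and in invoking $\nabla_{\partial_i}\partial_j=\Gamma_{ij}^k\partial_k$ from Definition~\ref{def:chr}, is precisely the right way to fill in the omitted argument, and it correctly covers the general affine-connection setting of the statement.
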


\section{Geometry of the two-parameter family of Pareto distribution}
\label{sec:geom}
In this section, we study the geometrical structure related to the Fisher--Rao metric of the two-parameter family of Pareto distribution.
The Fisher--Rao metric provides us with a Riemannian manifold structure and makes the differential-geometrical tools applicable on the parameter space of statistical models.

For a family of distribution with probability function $p(x\,|\,\theta)$, where $\theta=(\theta^1,\dotsc,\theta^m)\in\Theta$  being an open subset of $\mathbb R^m$, the Fisher--Rao metric is defined below.
\begin{defn}\label{defn:FR}
Let $X$ denote a random variable which represents the probability function $p(x\,|\,\theta)$ and $\partial_i=\frac{\partial}{\partial\,\theta^i}$.
The Fisher--Rao metric matrix about the frame $\partial_1,\dotsc,\partial_n$ is defined via the expectation as
\begin{equation}
\label{eqn:fr}
g_{ij}(\theta)=E[\partial_il(X\,|\,\theta)\,\partial_jl(X\,|\,\theta)]=\int\partial_il(x\,|\,\theta)\partial_jl(x\,|\,\theta)p(x\,|\,\theta)\mathrm dx,
\end{equation}
where $l(x\,|\,\theta)=\log p(x\,|\,\theta)$ is the log-likelihood function.
\end{defn}

Suppose that the following {\em regularity conditions} hold, namely,
\begin{enumerate}[(i)]
\item For each $x$,
the mapping $\theta\mapsto p(x\,|\,\theta)$ is smooth.
\item The order of integration and differentiation can be freely
rearranged. For instance,
\begin{equation}
\int \partial_ip(x\,|\,\theta)\operatorname{d}\!x=
\partial_i\int p(x\,|\,\theta)\operatorname{d}\!x=\partial_i 1=0.
\end{equation}
For discrete distributions, we simply replace the integration by summation.
\item Different parameters stand for different probability density functions, that is,
$\theta_1\neq\theta_2$ implies that $p(x\,|\,\theta_1)$ and
$p(x\,|\,\theta_2)$ are different. Moreover, every parameter
$\theta$ possesses a common support where $p(x\,|\,\theta)>0$.
\end{enumerate}
Then we have
\begin{equation}
\label{eqn:zero}
E[\partial_il(X\,|\,\theta)]=0
\end{equation}
and the Fisher--Rao metric in the negative Hessian form
\begin{equation}
\label{eqn:alt}
g_{ij}(\theta)=-E[\partial_i\partial_jl(X\,|\,\theta)].
\end{equation}

Now, we calculate the Fisher--Rao metric for the two-parameter family of Pareto distribution.
Its probability density function is given by
\begin{equation}
\label{eqn:pdf}
p(x\,|\,\alpha,\beta)=\frac{\beta\alpha^\beta}{x^{\beta+1}}I_{[x\ge\alpha]},\ \alpha>0, \beta>0.
\end{equation}
Thus, the log-likelihood function is given by
\[
l(x\,|\,\alpha,\beta)=\log p(x\,|\,\alpha,\beta)=\log\beta+\beta\log\alpha-(\beta+1)\log x.
\]

Note that, with parameters $\theta=(\alpha,\beta)\in\Theta=\mathbb R^+\times\mathbb R^+$, the probability density function of Pareto distribution does not satisfy the second regularity condition since  the support of  $x$ depends on parameter $\alpha$,
and hence the negative Hessian form {\eqref{eqn:alt} is not valid as the Fisher--Rao metric.
If it is used regardless,
one would obtain a `fake' metric matrix
\[
\left(
 \begin{array}{rr}
  \frac{\beta}{\alpha^2} &-\frac{1}{\alpha} \\
   -\frac{1}{\alpha} & \frac{1}{\beta^2}
 \end{array}
\right) ,
\]
which is not positive definite unless $0<\beta<1$.

Set $\theta^1=\alpha$, $\theta^2=\beta$ and $\partial_1=\partial/\partial\alpha$, $\partial_2=\partial/\partial\beta$.
By Definition \ref{defn:FR} and the observation that the random variable $(\log X-\log\alpha)$ obeys the exponential distribution with mean value $1/\beta$,
the proper Fisher--Rao metric is calculated as
\begin{equation}
\label{eqn:gij}
g_{11}=\frac{\beta^2}{\alpha^2},\ g_{12}=g_{21}=0,\ g_{22}=\frac{1}{\beta^2}.
\end{equation}
Above results about the Fisher--Rao metric  of Pareto distribution are summarized as follows.
\begin{prop}
The tensor expression of the Fisher--Rao metric for the two-parameter family of Pareto distribution is given by
\begin{equation}
\label{eqn:metric}
g=\frac{\beta^2}{\alpha^2}\,\mathrm d\alpha\otimes\mathrm d\alpha+\frac{1}{\beta^2}\,\mathrm d\beta\otimes\mathrm d\beta.
\end{equation}
\end{prop}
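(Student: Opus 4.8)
The plan is to compute the three metric components $g_{11}$, $g_{12}=g_{21}$ and $g_{22}$ directly from the defining expectation \eqref{eqn:fr}, since the negative Hessian form \eqref{eqn:alt} is simply unavailable here: because the support of $x$ depends on $\alpha$, the second regularity condition fails and the score identity \eqref{eqn:zero} cannot be invoked. I would begin by differentiating the log-likelihood to obtain $\partial_1 l=\beta/\alpha$ and $\partial_2 l=1/\beta+\log\alpha-\log x$. Already at this stage the failure of regularity is visible: the score $\partial_1 l=\beta/\alpha$ has nonzero expectation $\beta/\alpha$, which confirms that the Hessian shortcut must be discarded and that each $g_{ij}$ has to be assembled honestly from \eqref{eqn:fr}.

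Next I would introduce the auxiliary random variable $Y=\log X-\log\alpha$. The stated observation is that $Y$ is exponentially distributed with mean $1/\beta$; a one-line change of variables in the density \eqref{eqn:pdf} confirms that $Y$ has density $\beta e^{-\beta y}$ on $y\ge 0$, so that $E[Y]=1/\beta$ and $\mathrm{Var}(Y)=1/\beta^2$. Rewriting the derivatives in terms of $Y$ gives the convenient forms $\partial_1 l=\beta/\alpha$, a constant, and $\partial_2 l=1/\beta-Y$.

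With these in hand each component is a short moment computation. Since $\partial_1 l$ is constant, $g_{11}=E[(\beta/\alpha)^2]=\beta^2/\alpha^2$. For the off-diagonal entry, $g_{12}=(\beta/\alpha)\,E[1/\beta-Y]=(\beta/\alpha)(1/\beta-1/\beta)=0$, which is exactly where the nonvanishing score mean cancels against the shift $1/\beta$. For the remaining entry, $g_{22}=E[(1/\beta-Y)^2]=E[(Y-E[Y])^2]=\mathrm{Var}(Y)=1/\beta^2$. Collecting these into the coordinate coframe $(\mathrm d\alpha,\mathrm d\beta)$ then yields the tensor expression \eqref{eqn:metric}.

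I would expect the difficulty here to be conceptual rather than computational. The main point of care is the refusal to use the negative Hessian form: one must work from the definition \eqref{eqn:fr} throughout and must not silently assume $E[\partial_i l]=0$, since that assumption is false for the $\alpha$-direction. The only other mild subtlety is correctly identifying the law of $Y$ and recalling that the variance of an exponential distribution equals the square of its mean, which is precisely what produces the clean value $1/\beta^2$ in $g_{22}$.
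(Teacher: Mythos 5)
Your proof is correct and takes essentially the same route as the paper: the paper also computes the metric directly from the definition~\eqref{eqn:fr}, explicitly rejecting the negative Hessian form~\eqref{eqn:alt} because the support depends on $\alpha$, and relies on the same key observation that $\log X-\log\alpha$ is exponentially distributed with mean $1/\beta$. You have merely written out the moment computations ($g_{11}=\beta^2/\alpha^2$, $g_{12}=0$, $g_{22}=\mathrm{Var}(Y)=1/\beta^2$) that the paper leaves implicit.
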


In the rest of this paper, we shall denote the statistical manifold of Pareto distribution by
\[
P=\left\{\,p_{\alpha,\beta}\mid p_{\alpha,\beta}(x)=p(x\,|\,\alpha,\beta),\ \alpha>0, \beta>0\right\}.
\]
Together with the Fisher--Rao metric $g$ given by~\eqref{eqn:metric}, $(P,g)$ becomes a Riemannian manifold in the sense of Definition~\ref{def:rie}.

\subsection{An isometry between the Pareto manifold and the Poincar\'e upper half-plane model}
In this subsection, we shall show that the Riemannian manifold $(P,g)$ is isometric to the Poincar\'e upper half-plane model.

The Poincar\'e upper half-plane model (cf. \cite{Jost2006,Stahl1993}) is the upper half-plane
\[
H=\left\{(x,y)\in\mathbb R^2 \mid y>0\right\},
\]
together with the Poincar\'e metric
\[
h=\frac{\mathrm dx\otimes\mathrm dx+\mathrm dy\otimes\mathrm dy}{y^2}.
\]
Let $F:(P,g)\to(H,h)$ map $p_{\alpha,\beta}$ to $(\log\alpha,1/\beta)$.
Then, we have
\begin{align*}
F^*h&=
\frac{\mathrm d\log\alpha\otimes\mathrm d\log\alpha+\mathrm d(1/\beta)\otimes\mathrm d(1/\beta)}{(1/\beta)^2}\\
&=\frac{(1/\alpha)^2\mathrm d\alpha\otimes\mathrm d\alpha+(1/\beta)^4\mathrm d\beta\otimes\mathrm d\beta}{(1/\beta)^2}\\
&=\frac{\beta^2}{\alpha^2}\,\mathrm d\alpha\otimes\mathrm d\alpha+\frac{1}{\beta^2}\,\mathrm d\beta\otimes\mathrm d\beta=g,
\end{align*}
which yields the following Proposition~\ref{prop:hyp}.
\begin{prop}
\label{prop:hyp}
The diffeomorphism $F$ defined above is an isometry between $(P,g)$ and $(H,h)$.
\end{prop}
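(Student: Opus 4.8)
The plan is to verify the two requirements in the definition of an isometry, namely that $F$ is a diffeomorphism and that $F^*h=g$. For the first, I would identify $P$ with the open first quadrant $\mathbb R^+\times\mathbb R^+$ through the coordinates $(\alpha,\beta)$ and write $F$ in coordinates as $(\alpha,\beta)\mapsto(x,y)=(\log\alpha,1/\beta)$. Since $\alpha>0$, the first component $\log\alpha$ sweeps out all of $\mathbb R$, and since $\beta>0$, the second component $1/\beta$ sweeps out $(0,\infty)$; hence the image is exactly $H=\{(x,y):y>0\}$. The candidate inverse is $(x,y)\mapsto p_{e^x,1/y}$, and both $F$ and this inverse are compositions of $\log$, exp and reciprocal on the relevant positive domains, so each is smooth. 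This establishes that $F$ is a diffeomorphism onto $H$.

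For the second requirement I would compute the pullback $F^*h$ directly in the $(\alpha,\beta)$ coordinates. The key ingredients are the pullbacks of the coordinate one-forms, $F^*(\mathrm dx)=\mathrm d(\log\alpha)=\alpha^{-1}\,\mathrm d\alpha$ and $F^*(\mathrm dy)=\mathrm d(1/\beta)=-\beta^{-2}\,\mathrm d\beta$, together with $F^*(y)=1/\beta$. Substituting these into $h=(\mathrm dx\otimes\mathrm dx+\mathrm dy\otimes\mathrm dy)/y^2$ and simplifying, the conformal factor $1/y^2$ becomes $\beta^2$, which turns the two squared differentials into $\beta^2\alpha^{-2}\,\mathrm d\alpha\otimes\mathrm d\alpha$ and $\beta^{-2}\,\mathrm d\beta\otimes\mathrm d\beta$. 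No mixed terms arise, since $h$ is diagonal and $F^*(\mathrm dx)$, $F^*(\mathrm dy)$ involve only $\mathrm d\alpha$ and only $\mathrm d\beta$ respectively, matching $g_{12}=g_{21}=0$. This reproduces exactly the metric $g$ of~\eqref{eqn:metric}, and since $F^*h=g$ together with the diffeomorphism property is precisely the definition, the proposition follows.

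I do not anticipate a genuine obstacle here: the statement reduces to a routine coordinate computation, and the pullback has in fact already been carried out in the display preceding the statement. The only points requiring care are bookkeeping ones, namely correctly differentiating $1/\beta$ (the minus sign is immaterial once the differential is squared, but the $\beta^{-2}$ factor is essential) and handling the $1/y^2$ conformal factor under pullback. It is also worth emphasising that what upgrades a mere pointwise agreement of metrics into a global isometry is the bijectivity of $F$, so verifying that the image is all of $H$ rather than a proper subset is the one step I would not skip.
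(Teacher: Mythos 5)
Your proposal is correct and follows essentially the same route as the paper, whose entire proof is the pullback computation $F^*h=g$ carried out in the display preceding the proposition. Your explicit verification that $F$ is a diffeomorphism onto all of $H$ (surjectivity via $\log\alpha\in\mathbb R$, $1/\beta\in(0,\infty)$, and smoothness of the inverse $(x,y)\mapsto p_{e^x,1/y}$) is a point the paper leaves implicit, but it is a routine supplement rather than a different approach.
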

Now we can make use of the geometry of the Poincar\'e upper half-plane model to study the statistical manifold of Pareto distribution,
since isometry preserves  essential geometrical structures.

\subsection{Connection form, curvature form and Christoffel symbols}
As a consequence of Proposition~\ref{prop:hyp}, $(P,g)$ has constant Gaussian curvature $K=-1$. It hence contributes as another member of the statistical manifolds with constant curvatures, e.g. \cite{Peng2019,Cao2008}.

Now we shall study the geometrical structure of $(P,g)$ in detail by using differential forms,
which have been tools of great power and versatility in differential geometry,
since \'Elie Cartan pioneered its use in the 1920s~\cite{Cartan1952-1955}.
In this subsection, we derive the connection and curvature for $(P,g)$ in terms of differential forms.

With the metric given by~\eqref{eqn:metric}, we obtain an orthonormal frame as
\begin{equation}
\label{eqn:frame}
e_1=\frac{\alpha}{\beta}\,\partial_1,\ e_2=\beta\,\partial_2.
\end{equation}
The dual frame with respect to~\eqref{eqn:frame} is given by
\begin{equation}
\label{eqn:dual}
\theta^1=\frac{\beta}{\alpha}\,\mathrm d\alpha,\ \theta^2=\frac{1}{\beta}\,\mathrm d\beta.
\end{equation}
The volume form is given by
\begin{equation}
\label{eqn:vol}
\mathrm{vol}=\theta^1\wedge\theta^2=\frac{1}{\alpha}\,\mathrm d\alpha\wedge\mathrm d\beta,
\end{equation}
which can also be derived from \eqref{eqn:volm}.

Let $\nabla$ be the unique Riemannian connection on $(P,g)$.
Let $[\omega_j^i]$ and $[\Omega_j^i]$ be the connection and curvature matrices of $\nabla$ relative to $e_1,e_2$, respectively.
By the skew-symmetry stated in Proposition~\ref{prop:skew}, we only need to determine $\omega_2^1$ and $\Omega_2^1$ for $(P,g)$.

By differentiating \eqref{eqn:dual}, we have
\begin{equation}
\label{eqn:diff1}
\mathrm d\theta^1=-\frac{1}{\alpha}\,\mathrm d\alpha\wedge\mathrm d\beta,\ \mathrm d\theta^2=0.
\end{equation}
The first structural equation~\eqref{eqn:1se} reads
\begin{equation}
\label{eqn:diff2}
\mathrm d\theta^1=-\omega_2^1\wedge\theta^2,\ \mathrm d\theta^2=\omega_2^1\wedge\theta^1.
\end{equation}
By comparing~\eqref{eqn:diff1} and~\eqref{eqn:diff2}, we obtain
\[
\omega_2^1=\frac{\beta}{\alpha}\,\mathrm d\alpha.
\]
Using the second structural equation~\eqref{eqn:2se}, we get
\[
\Omega_2^1=\mathrm d\omega_2^1=-\frac{1}{\alpha}\,\mathrm d\alpha\wedge\mathrm d\beta=K\mathrm{vol}.
\]

Now we present another description of the connection $\nabla$ by the Christoffel symbols $\Gamma_{ij}^k$ defined in Definition~\ref{def:chr}.

For any smooth vector field $X$ on $M$, we have
\begin{align*}
\nabla_X\partial_1&=\nabla_X[(\beta/\alpha)e_1]=X(\beta/\alpha)e_1+(\beta/\alpha)\nabla_Xe_1\\
&=\left((X\beta)/\alpha-\beta (X\alpha)/\alpha^2\right)(\alpha/\beta)\partial_1-(\beta/\alpha)\,\omega_2^1(X)e_2\\
&=\left(\frac{X\beta}{\beta}-\frac{X\alpha}{\alpha}\right)\,\partial_1-\frac{\beta^3}{\alpha^2}(X\alpha)\,\partial_2.
\end{align*}
Similarly, we have
\begin{align*}
\nabla_X\partial_2&=\nabla_X[(1/\beta)e_2]=X(1/\beta)e_2+(1/\beta)\nabla_Xe_2\\
&=-[(1/\beta)^2X\beta]\beta\partial_2+(1/\beta)\,\omega_2^1(X)e_1\\
&=\frac{X\alpha}{\beta}\,\partial_1-\frac{X\beta}{\beta}\,\partial_2.
\end{align*}
Hence, relative to the coordinate frame $\partial_1,\partial_2$, we obtain
\begin{align*}
\nabla_{\partial_1}\partial_1&=-\frac{1}{\alpha}\,\partial_1-\frac{\beta^3}{\alpha^2}\,\partial_2,
&\nabla_{\partial_2}\partial_1&=\frac{1}{\beta}\,\partial_1,\\
\nabla_{\partial_1}\partial_2&=\frac{1}{\beta}\,\partial_1,
&\nabla_{\partial_2}\partial_2&=-\frac{1}{\beta}\,\partial_2.
\end{align*}
The corresponding Christoffel symbols $\Gamma_{ij}^k$ are given in Table~\ref{tab:chr}.
\begin{table}[htbp]
\caption{Christoffel symbols}
\label{tab:chr}
\centering
\begin{tabular}{l|cc}
\diagbox{$ij$}{$k$}&1&2\\
\hline
11&$-1/\alpha$&$-\beta^3/\alpha^2$\\
12&$1/\beta$&0\\
21&$1/\beta$&0\\
22&0&$-1/\beta$
\end{tabular}
\end{table}
\subsection{Geodesics and geodesic distances}
By~\eqref{eqn:geod} and Table~\ref{tab:chr}, we obtain the geodesic equations for $(P,g)$ as
\begin{equation}
\label{eqn:geoeq}
\ddot{\alpha}-\frac{\dot{\alpha}^2}{\alpha}+\frac{2\dot{\alpha}\dot{\beta}}{\beta}=0,\
\ddot{\beta}-\frac{\beta^3\dot{\alpha}^2}{\alpha^2}-\frac{\dot{\beta}^2}{\beta}=0.
\end{equation}

Thanks to the well-known results about $(H,h)$, we can derive explicit expression of the geodesics on $(P,g)$ by Proposition~\ref{prop:hyp} instead of solving~\eqref{eqn:geoeq} directly.
On $(H,h)$, the unit-speed geodesic starting from $(x_0,y_0)\in H$, with the initial velocity making an angle $\theta_0$ about the positive $x$-axis, is given by
\begin{align*}
x(t)&=x_0+y(t)\sinh t\cos\theta_0,\\
y(t)&=\frac{y_0}{\mathrm e^t\sin^2(\pi/4-\theta_0/2)+\mathrm e^{-t}\cos^2(\pi/4-\theta_0/2)}.
\end{align*}
Hence, the corresponding geodesic starting from $(\alpha_0,\beta_0)$ on $(P,g)$ is given by $F^{-1}(x(t),y(t))$, i.e.,
\begin{equation}
\begin{aligned}
\label{eqn:geoab}
\alpha(t)&=\alpha_0\exp\left(\frac{\sinh t\cos\theta_0}{\beta(t)}\right),\\
\beta(t)&=\beta_0\left(\mathrm e^t\sin^2(\pi/4-\theta_0/2)+\mathrm e^{-t}\cos^2(\pi/4-\theta_0/2)\right).
\end{aligned}
\end{equation}
Fig.~\ref{fig:geo} illustrates the radial geodesics given by \eqref{eqn:geoab} with $(\alpha_0,\beta_0)=(1,1)$ and $\theta_0=k\pi/16,k=0,1,\dotsc,31$ respectively, which outline the shape of the unit geodesic ball with $t\in[0,1]$.
\begin{figure}[htbp]
\centering
  \includegraphics[scale=0.33]{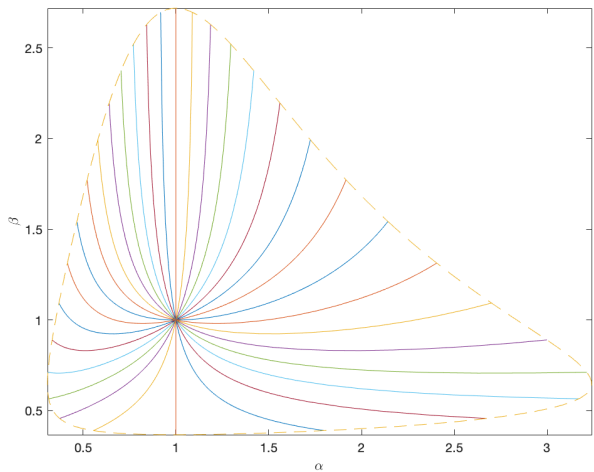}
\caption{Geodesic ball centered at $(1,1)$}
\label{fig:geo}
\end{figure}

On $(H,h)$, the geodesic distance between two points $(x_0,y_0)$ and $(x_1,y_1)$ is given by
\[
d_H((x_0,y_0),(x_1,y_1))=\mathrm{arcosh}\left(1+\frac{(x_0-x_1)^2+(y_0-y_1)^2}{2y_0y_1}\right).
\]
Hence, the geodesic distance on $(P,g)$ between probability densities $p_{\alpha_0,\beta_0}$ and $p_{\alpha_1,\beta_1}$ is given by $d_H(F(p_{\alpha_0,\beta_0}),F(p_{\alpha_1,\beta_1}))$, i.e.,
\begin{equation}
\label{eqn:dist}
d(p_{\alpha_0,\beta_0},p_{\alpha_1,\beta_1})=\mathrm{arcosh}\,\left(1+\frac{\beta_0\beta_1(\log\alpha_0-\log\alpha_1)^2}{2}+\frac{(\beta_0-\beta_1)^2}{2\beta_0\beta_1}\right).
\end{equation}

\section{An application to Jeffreys prior}
\label{sec:bayes}
In statistics, the first step in Bayesian inference for parametric models is to select an appropriate prior distribution for the related parameters.
As there already exist many useful results about Bayesian inference for Pareto distribution, e.g.~\cite{Arnold1983,Arnold1989},
we shall focus ourselves on the Bayesian approach generated from the so-called Jeffreys prior.
The Jeffreys prior is a non-informative prior which is directly related to the Fisher--Rao metric by stipulating that the prior probability be proportional to the geometrical volume in the parameter space.
Although there already exist lots of priors from which one can carefully choose for practical use,
our choice of the Jeffreys prior is made here in order to observe the statistical feature of our specified geometrical structure.

By the volume form in~\eqref{eqn:vol}, we obtain the Jeffreys prior for the two-parameter Pareto model as
\begin{equation}
\label{eqn:prior}
p(\alpha,\beta)\propto \frac{1}{\alpha},\,\alpha>0,\beta>0.
\end{equation}
The proportionality in~\eqref{eqn:prior} cannot give us a proper probability distribution
as integration of $p(\alpha,\beta)$ yields infinity,
but an improper prior can still be useful.
Improper priors usually provide much less information than that available in the observed data,
which is a desired property as most inference problems are mainly based on data analysis rather than the specification of priors.

To deal with the inference problem after we observed the data $\mathbf x=(x_1,\dotsc,x_n)$ from Pareto distribution with unknown parameters,
we first calculate the posterior distribution by using~\eqref{eqn:prior} as the improper prior.
Bayes' theorem gives the posterior probability density as
\begin{equation}
\label{eqn:bayes}
p(\alpha,\beta\,|\, \mathbf x)=\frac{p(\mathbf x\,|\, \alpha,\beta)\, p(\alpha,\beta)}{\int_0^\infty\int_0^\infty p(\mathbf x\,|\,\alpha,\beta)\, p(\alpha,\beta)\, \mathrm d\alpha\, \mathrm d\beta},
\end{equation}
where the joint probability density of the observations $\mathbf x$ is given by
\begin{equation}
\label{eqn:joint}
p(\mathbf x\,|\,\alpha,\beta)=\beta^n\alpha^{n\beta}{\left(\prod_{i=1}^nx_i\right)}^{-\beta-1}I_{[\min_{i=1}^nx_i\ge\alpha]}.
\end{equation}
By the factorization criterion, we note from~\eqref{eqn:joint} that the statistics
\[
q_1(\mathbf x)=\min_{i=1}^nx_i,\ q_2(\mathbf x)=\sum_{i=1}^n\log x_i
\]
are jointly sufficient statistics for the parameters $\alpha,\beta$.
Actually, by classical results in statistics (cf.~\cite{Casella2001,DeGroot2012}),
$q_1(\mathbf x),q_2(\mathbf x)$ are minimal jointly sufficient statistics for $\alpha,\beta$,
since they are equivalent to the maximum likelihood estimator (MLE) of $\alpha,\beta$:
\[
\widehat{\alpha}(\mathbf x)=q_1(\mathbf x),\ \widehat{\beta}(\mathbf x)=\frac{n}{q_2(\mathbf x)-n\log q_1(\mathbf x)}.
\]
By direct computation, we have
\begin{align*}
\int_0^\infty\int_0^\infty p(\mathbf x\,|\, \alpha,\beta)\, p(\alpha,\beta)\mathrm d\alpha\, \mathrm d\beta
&=\int_0^\infty\beta^n\exp\left[-q_2(\mathbf x)(\beta+1)\right]\int_0^{q_1(\mathbf x)} \alpha^{n\beta-1}\mathrm d\alpha\, \mathrm d\beta\\
&=\frac{1}{n}\int_0^\infty\beta^{n-1}q_1^{n\beta}(\mathbf x)\exp\left[-q_2(\mathbf x)(\beta+1)\right]\mathrm d\beta\\
&=\frac{\exp\left[-q_2(\mathbf x)\right]\Gamma(n)}{n\left[q_2(\mathbf x)-n\log q_1(\mathbf x)\right]^{n}}.
\end{align*}
Thus, by~\eqref{eqn:bayes}, we obtain the joint posterior probability density for $\alpha,\beta$ as
\begin{equation}
\label{eqn:post}
p(\alpha,\beta\,|\, \mathbf x)=\frac{n\left[q_2(\mathbf x)-n\log q_1(\mathbf x)\right]^n}{\Gamma(n)}\beta^n\alpha^{n\beta-1}\exp\left[-q_2(\mathbf x)\beta\right]I_{[0<\alpha\le q_1(\mathbf x)]}.
\end{equation}

Next, we calculate the marginal posteriors by integrating~\eqref{eqn:post}. The marginal posterior of $\alpha$ is given by
\begin{equation}
\label{eqn:marga}
p(\alpha\,|\, \mathbf x)=\int_0^\infty p(\alpha,\beta\,|\,\mathbf x)\mathrm d\beta
=\frac{n^2\left[q_2(\mathbf x)-n\log q_1(\mathbf x)\right]^n}{\alpha\left[q_2(\mathbf x)-n\log\alpha\right]^{n+1}}I_{[0<\alpha\le q_1(\mathbf x)]},
\end{equation}
and the cumulative distribution function of $\alpha$ is
\begin{equation}
\label{eqn:cdf}
\Pr(\alpha\le t\,|\,\mathbf x)={\left(\frac{q_2(\mathbf x)-n\log q_1(\mathbf x)}{q_2(\mathbf x)-n\log t}\right)}^n,\ 0<t\le q_1(\mathbf x).
\end{equation}
We notice from~\eqref{eqn:marga} that one anomalous posterior behavior of $\alpha$ is that $p(\alpha\,|\, \mathbf x)\to\infty$ as $\alpha\to0^+$,
and this phenomenon cannot be eliminated even with increasing sample size $n$.
However, from~\eqref{eqn:cdf}, we observe that
\begin{equation}
\label{eqn:cdfa}
\Pr(\alpha\le t\,|\,\mathbf x)=\frac{1}{\left[1+\widehat{\beta}(\mathbf x)\log(\widehat{\alpha}(\mathbf x)/t)\right]^n},
\end{equation}
which tends to 0 for $t<c\widehat{\alpha}(\mathbf x)$ with positive $c<1$ as $n\to\infty$.
This means that the distribution~\eqref{eqn:marga} is concentrated in the vicinity of $\widehat{\alpha}(\mathbf x)$ for large sample size $n$ in spite of the unboundedness near 0.
Similarly, the marginal posterior of $\beta$ is obtained as
\begin{equation}
\label{eqn:margb}
p(\beta\,|\,\mathbf x)
=\frac{\beta^{n-1}\left[q_2(\mathbf x)-n\log q_1(\mathbf x)\right]^n}{\Gamma(n)}\exp\left[-\left(q_2(\mathbf x)-n\log q_1(\mathbf x)\right)\beta\right],
\end{equation}
which is a probability density of gamma distribution.
Hence, the posterior feature of $\beta$ is readily identified as it belongs to a well-studied distribution.
Suppose the random variables $V_1,\dotsc,V_n$ are independently drawn from the exponential distribution with population mean $\widehat{\beta}(\mathbf x)$, then the sample mean $\frac{1}{n}\sum_{i=1}^nV_i$ obeys the gamma distribution~\eqref{eqn:margb}.
As a consequence of the central limit theorem, for large sample size $n$, the distribution of $\beta$ in~\eqref{eqn:margb} approximates to the normal distribution with mean $\widehat{\beta}(\mathbf x)$ and variance $\widehat{\beta}^2(\mathbf x)/n$.

Furthermore, to deal with the situation when either of $\alpha$ and $\beta$ is known, we can derive the conditional posteriors from the joint posterior and the marginal posteriors.
The conditional posterior of $\alpha$ with known $\beta$ is given by
\[
p(\alpha\,|\,\mathbf x,\beta)=\frac{n\beta\alpha^{n\beta-1}}{q_1^{n\beta}(\mathbf x)}I_{[0<\alpha\le q_1(\mathbf x)]},
\]
with cumulative distribution function
\begin{equation}
\label{eqn:ccdf}
\Pr(\alpha\le t\,|\,\mathbf x,\beta)=[t/\widehat\alpha(\mathbf x)]^{n\beta},\ 0<t\le\widehat\alpha(\mathbf x).
\end{equation}
The conditional posterior of $\beta$ with known $\alpha$ is given by
\[
p(\beta\,|\,\mathbf x,\alpha)=\frac{\beta^n\left[q_2(\mathbf x)-n\log\alpha\right]^{n+1}}{\Gamma(n+1)}\exp\left[-\left(q_2(\mathbf x)-n\log\alpha\right)\beta\right],
\]
which is again a gamma distribution.
Both of these two distributions are easy to manipulate,
and we shall not discuss them in depth.

Now we can derive some estimators and predictions for the parameters $\alpha,\beta$ from the posterior distributions.
By setting $\Pr(\alpha\le t\,|\, \mathbf x)=1/2$ in~\eqref{eqn:cdfa}, the posterior median $\widetilde\alpha(\mathbf x)$ of $\alpha$ can be obtained as
\[
\widetilde\alpha(\mathbf x)=\widehat\alpha(\mathbf x)\exp\left(\frac{1-\sqrt[n]{2}}{\widehat\beta(\mathbf x)}\right).
\]
Similarly, by using~\eqref{eqn:ccdf}, the posterior median $\widetilde\alpha(\mathbf x,\beta)$ of $\alpha$ with known $\beta$ can be obtained as
\[
\widetilde\alpha(\mathbf x,\beta)=2^{-\frac{1}{n\beta}}\widehat\alpha(\mathbf x).
\]
As the posterior and conditional posterior of $\beta$ both satisfy the gamma distribution,
their medians do not have simple closed form.

The posterior mean $\bar\alpha(\mathbf x)$ of $\alpha$ can be computed as
\begin{equation}
\label{eqn:postm}
\bar\alpha(\mathbf x)=E(\alpha\,|\,\mathbf x)=\int_0^{\widehat{\alpha}(\mathbf x)}t\ \mathrm d\Pr(\alpha\le t\,|\,\mathbf x)
=\widehat{\alpha}(\mathbf x)-\int_0^{\widehat{\alpha}(\mathbf x)}\Pr(\alpha\le t\,|\,\mathbf x)\,\mathrm dt.
\end{equation}
Although it does not yield a simple closed form,
we can still obtain the following bounds for $\bar\alpha(\mathbf x)$.
\begin{prop} These inequalities hold for the posterior mean $\bar\alpha(\mathbf x)$:
\begin{equation}
\label{eqn:ineq}
\frac{(n-1)\widehat\beta(\mathbf x)-1}{(n-1)\widehat\beta(\mathbf x)}\widehat\alpha(\mathbf x)\le\bar\alpha(\mathbf x)\le\frac{n\widehat\beta(\mathbf x)}{n\widehat\beta(\mathbf x)+1}\widehat{\alpha}(\mathbf x).
\end{equation}
\end{prop}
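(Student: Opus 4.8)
The plan is to start from the integral representation of the posterior mean in~\eqref{eqn:postm} together with the closed form of the cumulative distribution function in~\eqref{eqn:cdfa}, and to reduce the whole problem to estimating a single one-dimensional integral. Writing $\widehat\alpha=\widehat\alpha(\mathbf x)$ and $\widehat\beta=\widehat\beta(\mathbf x)$ for brevity, I would substitute $u=\log(\widehat\alpha/t)$, equivalently $t=\widehat\alpha\,\mathrm e^{-u}$, in the integral $\int_0^{\widehat\alpha}\Pr(\alpha\le t\,|\,\mathbf x)\,\mathrm dt$. Since $\Pr(\alpha\le t\,|\,\mathbf x)=[1+\widehat\beta\log(\widehat\alpha/t)]^{-n}$ by~\eqref{eqn:cdfa}, and $\mathrm dt=-\widehat\alpha\,\mathrm e^{-u}\,\mathrm du$ with $u$ running from $\infty$ to $0$, this transforms~\eqref{eqn:postm} into
\[
\bar\alpha(\mathbf x)=\widehat\alpha\,(1-J),\qquad J=\int_0^\infty\frac{\mathrm e^{-u}}{(1+\widehat\beta u)^n}\,\mathrm du.
\]
Everything then comes down to sandwiching the single quantity $J$.

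For the two bounds on $J$ I would use two elementary pointwise estimates of the integrand. For the lower bound, applying $1+\widehat\beta u\le\mathrm e^{\widehat\beta u}$ gives $(1+\widehat\beta u)^{-n}\ge\mathrm e^{-n\widehat\beta u}$, so that the integrand dominates $\mathrm e^{-(1+n\widehat\beta)u}$; integrating yields $J\ge 1/(1+n\widehat\beta)$. For the upper bound, discarding the factor $\mathrm e^{-u}\le 1$ reduces $J$ to $\int_0^\infty(1+\widehat\beta u)^{-n}\,\mathrm du$, which evaluates to $1/[(n-1)\widehat\beta]$ after the substitution $v=1+\widehat\beta u$; this step is where the hypothesis $n>1$ is genuinely needed to ensure convergence. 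Thus $1/(1+n\widehat\beta)\le J\le 1/[(n-1)\widehat\beta]$.

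Finally I would translate these bounds back to $\bar\alpha(\mathbf x)=\widehat\alpha\,(1-J)$. The point requiring attention is that $1-J$ is \emph{decreasing} in $J$, so the roles swap: the upper bound on $J$ produces the lower bound on $\bar\alpha(\mathbf x)$, namely $\bar\alpha(\mathbf x)\ge\widehat\alpha\,(1-1/[(n-1)\widehat\beta])$, while the lower bound on $J$ produces the upper bound $\bar\alpha(\mathbf x)\le\widehat\alpha\,(1-1/(1+n\widehat\beta))$; simplifying the two fractions gives exactly~\eqref{eqn:ineq}. The only real subtlety here is bookkeeping---keeping the direction of each inequality straight through the change of variables and the reflection $J\mapsto 1-J$---while the analytic content reduces to the two one-line estimates $1+x\le\mathrm e^x$ and $\mathrm e^{-u}\le1$.
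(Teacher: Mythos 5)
Your proof is correct and follows essentially the same route as the paper: the paper bounds the integral $\int_0^{\widehat\alpha(\mathbf x)}\Pr(\alpha\le t\,|\,\mathbf x)\,\mathrm dt$ from below via $1+\widehat\beta(\mathbf x)\log(\widehat\alpha(\mathbf x)/t)\le[\widehat\alpha(\mathbf x)/t]^{\widehat\beta(\mathbf x)}$ (your $1+x\le\mathrm e^x$ estimate, applied before the substitution) and from above via exactly your substitution $u=\log(\widehat\alpha(\mathbf x)/t)$ together with $\mathrm e^{-u}\le1$. Your only cosmetic difference is performing the change of variables once at the start and working with the single quantity $J$ throughout, which is a tidy repackaging of the same two one-line estimates.
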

\begin{proof}
Note that
\[
1+\widehat{\beta}(\mathbf x)\log\left(\widehat{\alpha}(\mathbf x)/t\right)\le\exp\left[\widehat{\beta}(\mathbf x)\log\left(\widehat{\alpha}(\mathbf x)/t\right)\right]
=\left[\widehat{\alpha}(\mathbf x)/t\right]^{\widehat{\beta}(\mathbf x)}.
\]
From this, we have
\[
\int_0^{\widehat{\alpha}(\mathbf x)}\Pr(\alpha\le t\,|\,\mathbf x)\,\mathrm dt\ge\int_0^{\widehat{\alpha}(\mathbf x)}\left[t/\widehat{\alpha}(\mathbf x)\right]^{n\widehat{\beta}(\mathbf x)}\mathrm dt=\frac{\widehat{\alpha}(\mathbf x)}{n\widehat\beta(\mathbf x)+1}.
\]
By~\eqref{eqn:postm}, we obtain the second inequality of~\eqref{eqn:ineq}.

By change of variables $u=\log(\widehat{\alpha}(\mathbf x)/t)$, we have
\begin{align*}
\int_0^{\widehat{\alpha}(\mathbf x)}\Pr(\alpha\le t\,|\,\mathbf x)\,\mathrm dt
&=\int_0^\infty\frac{\widehat{\alpha}(\mathbf x)\exp(-u)}{\left[1+\widehat{\beta}(\mathbf x)u\right]^n}\mathrm du\\
&\le\int_0^\infty\frac{\widehat{\alpha}(\mathbf x)}{\left[1+\widehat{\beta}(\mathbf x)u\right]^n}\, \mathrm du
=\frac{\widehat{\alpha}(\mathbf x)}{(n-1)\widehat\beta(\mathbf x)}.
\end{align*}
This yields the first inequality of~\eqref{eqn:ineq}.
\end{proof}
The other posterior means can be readily computed from the corresponding distributions. We present these results   without details as follows. The posterior mean of $\alpha$ with known $\beta$ is given by
\[
\bar\alpha(\mathbf x,\beta)=E(\alpha\,|\, \mathbf x,\beta)=\frac{n\beta}{n\beta+1}\widehat{\alpha}(\mathbf x).
\]
The posterior mean of $\beta$ with unknown $\alpha$ is given by
\[
\bar\beta(\mathbf x)=E(\beta\,|\,\mathbf x)=\widehat{\beta}(\mathbf x).
\]
The posterior mean of $\beta$ with known $\alpha$ is given by
\[
\bar\beta(\mathbf x,\alpha)=E(\beta\,|\,\mathbf x,\alpha)=\frac{n+1}{q_2(\mathbf x)-n\log\alpha}.
\]

We can also easily determine the posterior predictive distribution of a new observation $\widetilde x$ which is independently drawn from the Pareto distribution with the same parameters as the previous observations $\mathbf x$.
The posterior predictive distribution with unknown $\alpha,\beta$ is
\begin{equation}
\label{eqn:pred}
\begin{aligned}
p(\widetilde x\,|\,\mathbf x)&=\int_0^\infty\int_0^\infty p(\widetilde x\,|\,\alpha,\beta)\, p(\alpha,\beta\,|\,\mathbf x)\mathrm d\alpha\, \mathrm d\beta\\
&=\frac{n^2\left[q_2(\mathbf x)-n\log q_1(\mathbf x)\right]^{n}}{(n+1)\widetilde{x}\left[q_2(\mathbf x)+\log\widetilde x-(n+1)\log\min\{\widetilde x,q_1(\mathbf x)\}\right]^{n+1}}I_{[\widetilde x>0]}.
\end{aligned}
\end{equation}
The posterior predictive distribution with known $\alpha$ is
\begin{align*}
p(\widetilde x\,|\, \mathbf x,\alpha)&=\int_0^\infty p(\widetilde x\,|\, \alpha,\beta)\, p(\beta\,|\, \mathbf x,\alpha)\, \mathrm d\beta\\
&=\frac{(n+1)\left[q_2(\mathbf x)-n\log\alpha\right]^{n+1}}{\widetilde x\left[q_2(\mathbf x)+\log\widetilde x-(n+1)\log\alpha\right]^{n+2}}I_{[\widetilde x\ge\alpha]}.
\end{align*}
The posterior predictive distribution with known $\beta$ is
\begin{align*}
p(\widetilde x\,|\, \mathbf x,\beta)&=\int_0^\infty p(\widetilde x\,|\,\alpha,\beta)\, p(\alpha\,|\, \mathbf x,\beta)\, \mathrm d\alpha\\
&=\frac{n}{n+1}\beta\widehat{\alpha}^{-n\beta}(\mathbf x)\widetilde x^{-\beta-1}\min\{\widetilde x,\widehat{\alpha}(\mathbf x)\}^{(n+1)\beta}I_{[\widetilde x>0]}\\
&=
\begin{cases}
\frac{n}{n+1}\beta\widehat{\alpha}^{-n\beta}(\mathbf x)\widetilde x^{n\beta-1},&0<\widetilde x<\widehat{\alpha}(\mathbf x),\\
\frac{n}{n+1}\beta\widehat{\alpha}^{\beta}(\mathbf x)\widetilde x^{-\beta-1},&\widetilde x\ge\widehat{\alpha}(\mathbf x).
\end{cases}
\end{align*}

\section{Simulations}
\label{sec:sim}
We shall generate random samples for simulation of the Pareto distribution with underlying parameters fixed as $\alpha_0,\beta_0$. By the inverse transform sampling method, the desired samples from Pareto distribution can be generated as $X=\alpha_0U^{-1/\beta_0}$, where $U$ is drawn from random numbers uniformly distributed in the unit interval $(0,1)$.

\begin{table}[htbp]
\caption{Estimators for Pareto distribution}
\label{tab:est}
 \centering
 \begin{tabular}{l|l|ccc}
 \toprule
 \multicolumn{2}{c|}{}&$\alpha$&$\beta$&$d$\\
 \hline
 \multirow{3}*{unknown $\alpha,\beta$}&MLE &1.0303&1.1271&0.1238\\
 &Posterior median &1.0240&1.1234&0.1190\\
 &Posterior mean  &1.0211&1.1271&0.1217\\
 \hline
 \multirow{3}*{known $\alpha=\alpha_0$}&MLE &1&1.0905&0.0866\\
 &Posterior median &1&1.0977&0.0932\\
 &Posterior mean  &1&1.1014&0.0965\\
 \hline
 \multirow{3}*{known $\beta=\beta_0$}&MLE &1.0303&1&0.0298\\
 &Posterior median &1.0232&1&0.0229\\
 &Posterior mean  &1.0201&1&0.0199\\
 \bottomrule
 \end{tabular}
\end{table}
We first randomly generate $100$ samples from the Pareto distribution with parameters $\alpha_0=1,\beta_0=1$.
The posterior distributions and corresponding estimators are obtained by the methods given in the previous section.
The results of estimators are recorded in Table~\ref{tab:est}.
To illustrate the extent to which the estimators approximate to the underlying parameters,
we show in the last column the distances $d(p_{\alpha_0,\beta_0},p_{\alpha,\beta})$ between the underlying parameters $\alpha_0,\beta_0$ and the estimators of $\alpha,\beta$ by using~\eqref{eqn:dist}.
One interesting phenomenon is that the distances decrease much more in the case of known $\beta$ than in the case of known $\alpha$. This implies that the known $\beta$ conveys much more information than the known $\alpha$.

We also provide graphical representations of the joint posterior and marginal posteriors as follows.
The joint posterior probability density function $p(\alpha,\beta\,|\,\mathbf x)$ is shown in Fig.~\ref{fig:joint}.
As we can see, the joint posterior probability is concentrated in a small area near the MLE $(\widehat{\alpha}(\mathbf x),\widehat{\beta}(\mathbf x))=(1.0303,1.1271)$.
We also note from~\eqref{eqn:post} that the density $p(\alpha,\beta\,|\,\mathbf x)$ is unbounded near $\alpha=0,\beta=0$,
but the inappreciable probability makes it imperceptible from Fig.~\ref{fig:joint}.
\begin{figure}[htbp]
 \centering
  \includegraphics[scale=0.35]{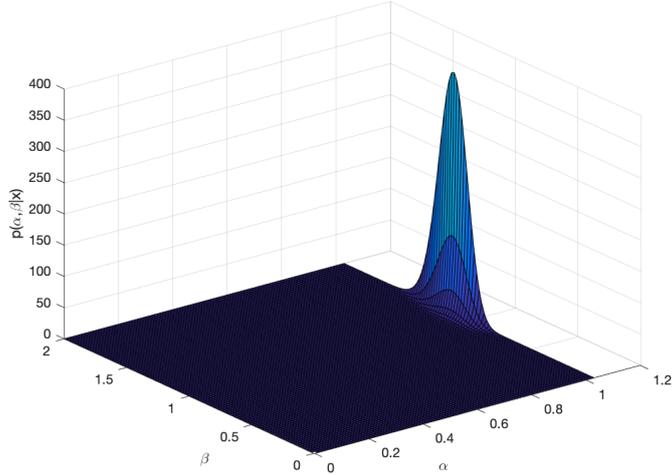}
 \caption{Joint posterior probability density function}
 \label{fig:joint}
\end{figure}

The marginal posterior probability density functions of $\alpha$ and $\beta$ are shown in Fig.~\ref{fig:marg}. From the marginal posterior of $\alpha$ in Fig.~\ref{fig:marg}(\subref{fig:marga}), we see that the probability density $p(\alpha\,|\,\mathbf x)$ is extremely concentrated in the vicinity of $\widehat{\alpha}(\mathbf x)$.
In addition, as we have already discussed,
the density $p(\alpha\,|\,\mathbf x)$ is unbounded near $\alpha=0$.
However, this fact cannot be directly seen from Fig.~\ref{fig:marg}(\subref{fig:marga}), since $\alpha$ diverges so slowly that it needs $\alpha$ being far less than $10^{-280}$ to achieve a noticeable magnitude of $p(\alpha\,|\,\mathbf x)$. Furthermore, by using~\eqref{eqn:cdfa}, we can deduce that the probability for $\alpha<0.9$ is less than $10^{-6}$. Thus, it is very unlikely that $\alpha$ is very distant from $\widehat{\alpha}(\mathbf x)$, let alone being near 0.
Fig.~\ref{fig:marg}(\subref{fig:margb}) illustrates the marginal posterior density $p(\beta\,|\,\mathbf x)$, which approximates to the normal distribution with mean $\widehat{\beta}(\mathbf x)=1.1271$ and standard deviation $\widehat{\beta}(\mathbf x)/\sqrt n=0.1127$, as we have mentioned earlier. This explains the concentration of the probability density of $\beta$ near $\widehat{\beta}(\mathbf x)$ as shown in Fig.~\ref{fig:marg}(\subref{fig:margb}).
 \begin{figure}[htbp]
 \begin{subfigure}[b]{0.5\linewidth}
  \centering
  \includegraphics[scale=0.25]{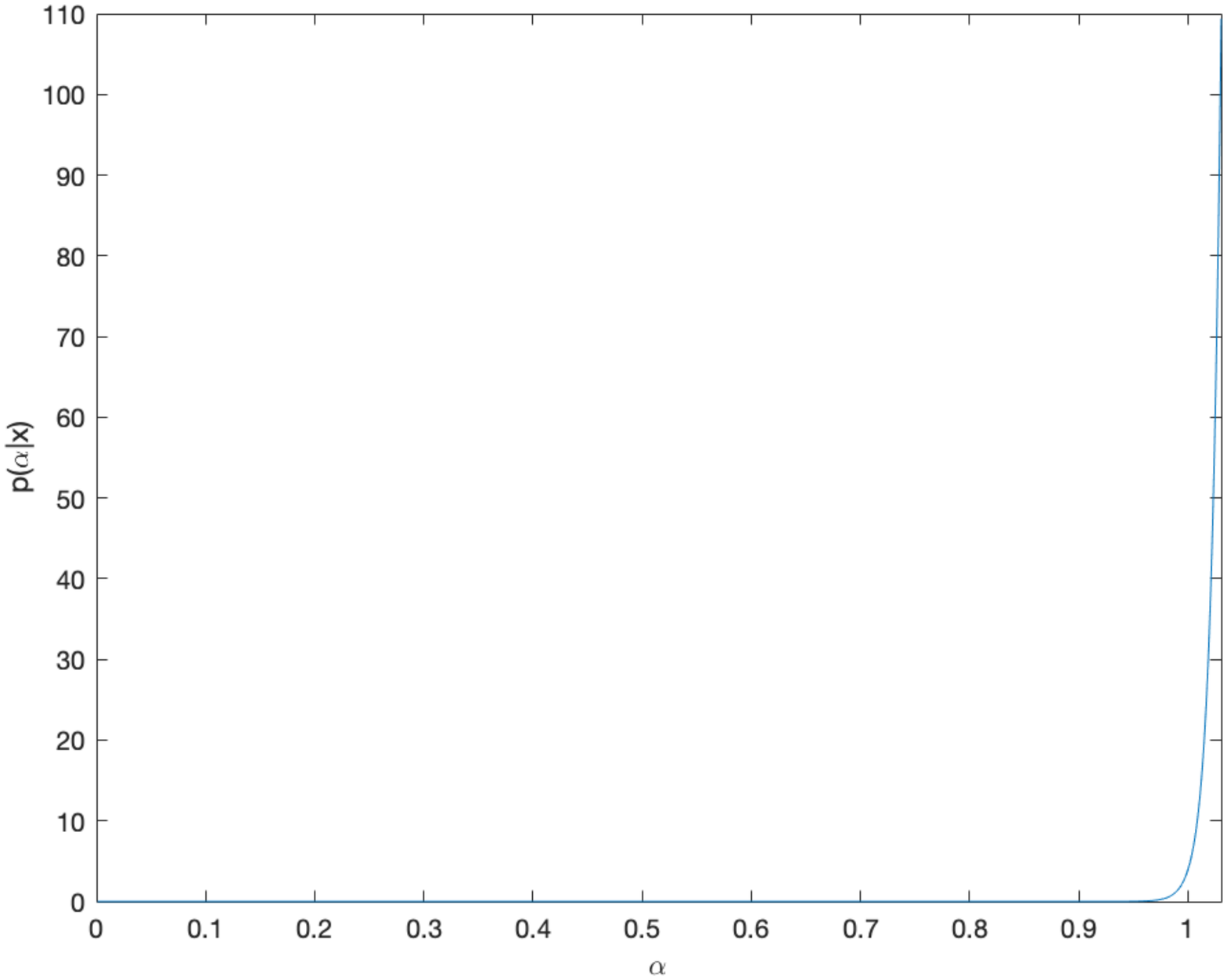}
  \caption{Marginal posterior of $\alpha$}\label{fig:marga}
 \end{subfigure}%
 \begin{subfigure}[b]{0.5\linewidth}
  \centering
   \includegraphics[scale=0.25]{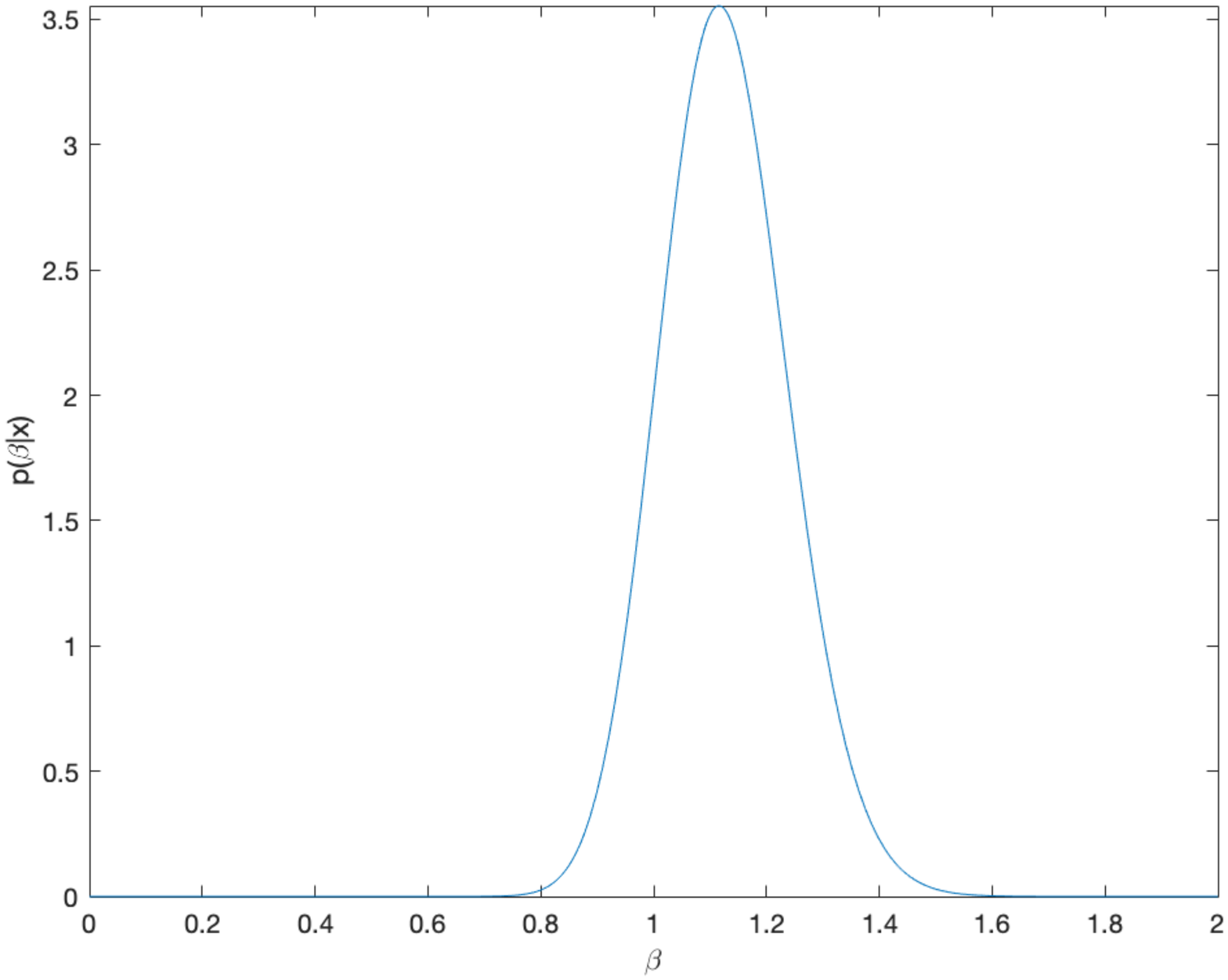}
  \caption{Marginal posterior of $\beta$}\label{fig:margb}
 \end{subfigure}%
\caption{Marginal posterior probability density functions}\label{fig:marg}
\end{figure}

Fig.~\ref{fig:pred} illustrates the approximation to the underlying Pareto distribution by the posterior predictive distribution.
The probability density function $p(x\,|\,\alpha,\beta)$ of the underlying distribution is determined by~\eqref{eqn:pdf} with $\alpha=\alpha_0,\beta=\beta_0$;
the probability density function $p(\widetilde x\,|\,\mathbf x)$ of the posterior predictive distribution is determined by~\eqref{eqn:pred}.
As we can see, the predictive density $p(\widetilde x\,|\,\mathbf x)$ well approximates to the underlying density $p(x\,|\,\alpha,\beta)$, except that it is continuous for $\widetilde x>0$ with a cusp at $\widetilde x=\widehat{\alpha}(\mathbf x)$.
We also note from~\eqref{eqn:pred} that $p(\widetilde x\,|\,\mathbf x)$ is unbounded near $\widetilde x=0$,
which cannot be directly seen from Fig.~\ref{fig:pred}.
\begin{figure}[htbp]
 \centering
  \includegraphics[scale=0.33]{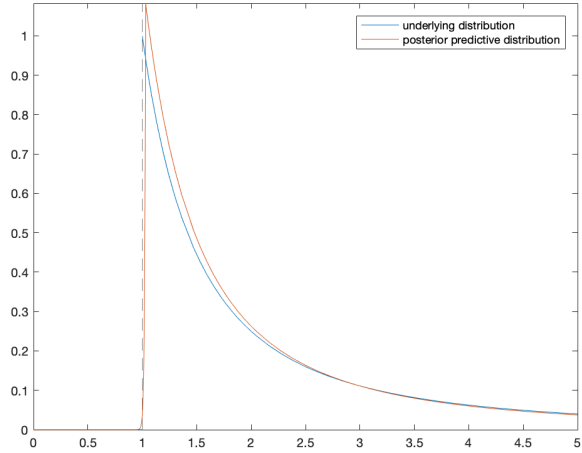}
 \caption{Underlying distribution and posterior predictive distribution}
 \label{fig:pred}
\end{figure}

\section{Conclusion}
In this paper, we proved that the two-parameter family of Pareto distribution with a proper Fisher--Rao metric is isometric to the Poincar\'e upper half-plane model. Geometrical properties of the Pareto distribution, such as connection, curvature and geodesics, could then be studied in the light of the isometry. One notable result is that it contributes as one of the statistical manifolds with constant curvatures among the very few known occasions, including the well-known normal distribution and the Weibull distribution \cite{Cao2008}; a classification of exponential families with constant curvatures is available in \cite{Peng2019}. Jeffreys prior, a non-informative prior closely related to the Fisher--Rao metric, was accordingly obtained to carry out Bayesian inference. We expect that results of this paper would motivate further differential-geometric investigations of statistical manifolds violating the regularity conditions as well as their applications.

\section*{Acknowledgements}
H Sun is supported by the National Natural Science Foundation of China (Nos. 61179031, 10932002). L Peng is supported by the MEXT ``Top Global University Project" and Waseda University Grant for Special Research Projects (Nos. 2019C-179, 2019E-036, 2019R-081).

\bibliographystyle{abbrv}
\bibliography{pareto-ref}

\end{document}